\newcommand{\fk}{\mathfrak{k}}
\newcommand{\ft}{\mathfrak{t}}
\begin{document}

\newcommand{\nc}{\newcommand}

\newcommand{\colvec}[2]{\left  ( \begin{array}{cc} #1  \\
     #2  \end{array} \right ) }

\newcommand{\Tr}{\,{\rm Tr}\,}
\newcommand{\End}{\,{\rm End}\,}
\newcommand{\Hom}{\,{\rm Hom}\,}

\newcommand{\Ker}{ \,{\rm Ker} \,}

\newcommand{\bla}{\phantom{bbbbb}}
\newcommand{\onebl}{\phantom{a} }
\newcommand{\eqdef}{\;\: {\stackrel{ {\rm def} }{=} } \;\:}
\newcommand{\sign}{\: {\rm sign}\: }
\newcommand{\sgn}{ \:{\rm sgn}\:}
\newcommand{\half}{ {\frac{1}{2} } }
\newcommand{\vol}{ \,{\rm vol}\, }

%

\newcommand{\beq}{\begin{equation}}
\newcommand{\eeq}{\end{equation}}
\newcommand{\beqst}{\begin{equation*}}
\newcommand{\eeqst}{\end{equation*}}
\newcommand{\barr}{\begin{array}}
\newcommand{\earr}{\end{array}}
\newcommand{\beqar}{\begin{eqnarray}}
\newcommand{\eeqar}{\end{eqnarray}}
\newtheorem{theorem}{Theorem}[section]
\newtheorem{corollary}[theorem]{Corollary}
\newtheorem{lemma}[theorem]{Lemma}
\newtheorem{prop}[theorem]{Proposition}
\newtheorem{definition}[theorem]{Definition}
\newtheorem{remit}[theorem]{Remark}
\newtheorem{conjecture}[theorem]{Conjecture}
\newcommand{\matr}[4]{\left \lbrack \begin{array}{cc} #1 & #2 \\
     #3 & #4 \end{array} \right \rbrack}

\newenvironment{rem}{\begin{remit}\rm}{\end{remit}}
\newcommand{\quott}{/\!/}
\newcommand{\hkquott}{/\!/\!/\!/}

\newcommand{\aff}{{\mathbb A}}
\newcommand{\RR}{{\mathbb R}}
\newcommand{\CC}{{\mathbb C}}
\nc{\FF}{ {\mathbb F} } 
\newcommand{\ZZ}{{\mathbb Z}}
\newcommand{\PP}{ {\mathbb P} }
\newcommand{\QQ}{{\mathbb Q}}
\newcommand{\UU}{{\mathbb U}}

\newcommand{\cala}{{\mbox{$\mathcal A$}}}
\newcommand{\calb}{{\mbox{$\mathcal B$}}}
\newcommand{\calc}{{\mbox{$\mathcal C$}}}
\newcommand{\cald}{{\mbox{$\mathcal D$}}}
\newcommand{\cale}{{\mbox{$\mathcal E$}}}
\newcommand{\calf}{{\mbox{$\mathcal F$}}}
\newcommand{\calg}{{\mbox{$\mathcal G$}}}
\newcommand{\calh}{{\mbox{$\mathcal H$}}}
\newcommand{\cali}{{\mbox{$\mathcal I$}}}
\newcommand{\calj}{{\mbox{$\mathcal J$}}}
\newcommand{\calk}{{\mbox{$\mathcal K$}}}
\newcommand{\call}{{\mbox{$\mathcal L$}}}
\newcommand{\calm}{{\mbox{$\mathcal M$}}}
\newcommand{\caln}{{\mbox{$\mathcal N$}}}
\newcommand{\calo}{{\mbox{$\mathcal O$}}}
\newcommand{\calp}{{\mbox{$\mathcal P$}}}
\newcommand{\calq}{{\mbox{$\mathcal Q$}}}
\newcommand{\calr}{{\mbox{$\mathcal R$}}}
\newcommand{\cals}{{\mbox{$\mathcal S$}}}
\newcommand{\calt}{{\mbox{$\mathcal T$}}}
\newcommand{\calu}{{\mbox{$\mathcal U$}}}
\newcommand{\calv}{{\mbox{$\mathcal V$}}}
\newcommand{\calw}{{\mbox{$\mathcal W$}}}
\newcommand{\calx}{{\mbox{$\mathcal X$}}}
\newcommand{\caly}{{\mbox{$\mathcal Y$}}}
\newcommand{\calz}{{\mbox{$\mathcal Z$}}}

\newcommand{\ib}{   }


%

%

\def\a{\alpha}
\def\b{\beta}
\def\g{\gamma}
\def\d{\delta}
\def\e{\epsilon}
\def\z{\zeta}
\def\h{\eta}
\def\t{\theta}
\def\k{\kappa}
\def\l{\lambda}
\def\m{\mu}
\def\n{\nu}
\def\x{\xi}
\def\p{\pi}
\def\r{\rho}
\def\s{\sigma}
\def\ta{\tau}
\def\u{\upsilon}
\def\ph{\phi}
\def\c{\chi}
\def\ps{\psi}
\def\o{\omega}

\def\G{\Gamma}
\def\D{\Delta}
\def\T{\Theta}
\def\L{\Lambda}
\def\X{\Xi}
\def\P{\Pi}
\def\U{\Upsilon}
\def\Ph{\Phi}
\def\Ps{\Psi}
\def\O{\Omega}

\nc{\itwopi}{ { 2 \pi i} }


 \setlength{\textwidth}{6.7in}
\setlength{\textheight}{7.5truein}
\setlength{\evensidemargin}{0in}
\setlength{\oddsidemargin}{0in}
\setlength{\topmargin}{0truein}
\setlength{\parskip}{0.2\baselineskip}


\newcommand{\cy}{C^\infty}
\nc{\su}{ {\frak{su} } }
\nc{\liet}{ \mathfrak{t} } 
\nc{\lieg}{\mathfrak{g} } 
\nc{\Lie}{\rm Lie}

\title{Intersection cohomology of the universal imploded cross-section of SU(3)}
\author{Nan-Kuo Ho}
\address{Department of Mathematics \\
National Tsing-Hua University \\ Hsinchu \\ Taiwan}
\email{nankuo@math.nthu.edu.tw}

\author{Lisa  Jeffrey}
\address{Department of Mathematics \\
University of Toronto \\ Toronto, Ontario \\ Canada}
\email{jeffrey@math.toronto.edu}
\date{22 October 2014}
\begin{abstract}{We compute the intersection cohomology of the universal imploded
cross-section of SU(3), and show that it is different from the intersection
cohomology of a point.}
\end{abstract}
\maketitle

\renewcommand{\quott}{/\!/}

\section{Introduction: purpose of imploded cross-sections}

Let $K$ be a compact Lie group with maximal torus $T$, and
let $M$ be a Hamiltonian $K$-manifold.
The Guillemin-Sternberg symplectic cross-section theorem tells us that the $K$-orbits of $\Phi^{-1}((\ft^*_+)^\circ)$ (the preimage of the interior of the positive Weyl Chamber $(\ft^*_+)^\circ$ under the moment map $\Phi$)
 is dense in $M$ and is symplectic with Hamiltonian $T$-action on
$\Phi^{-1}((\ft^*_+)^\circ)$.
 One motivation in \cite{GJS} was to {\em complete} this space in a nice way that is also compatible with  symplectic reduction. For points in the interior of the positive Weyl chamber, the stabilizer is the maximal torus $T$. However, for points on the boundary of $\ft^*_+$, the stabilizers are bigger than $T$ and the preimage under the moment map is not a symplectic manifold. So the idea of the imploded cross-section is that for those faces,
we must   collapse further
(taking the quotient by the commutator subgroup of their stabilizer)
so  that the quotient becomes a symplectic stratified space
 and that it would have a Hamiltonian $T$-action on each stratum. One beautiful property is that the symplectic quotient of the resulting imploded space by the $T$-action is the same as the original symplectic quotient of the manifold $M$ by the $K$-action.

The universal example of an imploded cross-section is defined as
the imploded
cross-section $(T^*K)_{\rm impl}$ of the cotangent bundle of the Lie group $K$.

It is important to understand the invariants of imploded cross sections.
In the simplest case $K = SU(2)$ the universal imploded cross-section is
$\CC^2$, so its topology is trivial.  In this paper we study the universal example for
$SU(3)$. The universal imploded cross-section for $SU(3)$ contracts to a point,
but its topological invariants are not all trivial. We compute its intersection cohomology (which is not a homotopy
invariant), and find that this invariant
distinguishes it from the
intersection cohomology of a point. In this paper all intersection cohomology groups
are computed with middle perversity, as in \cite{KW} which is our main reference, and all the cohomology groups and intersection cohomology groups are with real coefficients.

\vspace{6mm}
\noindent {\it Acknowledgments.}
We would like to thank Reyer Sjamaar for many useful discussions.

\section{Universal example for $SU(3)$}

For more detailed accounts of symplectic implosion, please see \cite{GJS}.

Let $(M,\omega)$ be a Hamiltonian $K$-space with moment map $\Phi:M\to \fk^*$. Fix a maximal torus $T$ and a closed fundamental Weyl Chamber $\ft^*_+$. Denote $K_\lambda$ the centralizer of $\lambda\in\fk^*$ and $[\cdot,\cdot]$ the commutator subgroup. Introduce an equivalence relation in $\Phi^{-1}(\ft^*_+)$ as follows: $\forall m_1,~m_2\in \Phi^{-1}(\ft^*_+)$, we say $m_1\sim m_2$ if $\exists k\in [K_{\Phi(m_1)},K_{\Phi(m_1)}]$ such that $m_2=km_1$.

\begin{definition}
The imploded cross-section of $M$ is the quotient space $M_{{\rm impl}}:=\Phi^{-1}(\ft^*_+)/\sim$, equipped with the quotient topology.
\end{definition}

\begin{prop}\cite[Theorem 3.4]{GJS}
The space
$M_{{\rm impl}}$ is a stratified (in the sense of Sjamaar-Lerman) Hamiltonian $T$-space with moment map $\Phi_{{\rm impl}}:M_{{\rm impl}}\to \ft^*_+$ induced by $\Phi$. Moreover, $\forall \lambda\in \ft^*_+$, the canonical map $\Phi^{-1}(\lambda)\to \Phi^{-1}_{\rm impl}(\lambda)$ induces an isomorphism of symplectic quotients $$M\quott_{{\mathcal{O}_\lambda}}K\cong M_{{\rm impl}}\quott_{\lambda}T.$$
Here, if $K$ is a nonabelian group,  the symbol
$M\quott_{\mathcal{O}_\lambda} K$ denotes the symplectic quotient of
a Hamiltonian $K$-manifold $M$ at
the orbit ${\mathcal{O}_\lambda}$.
Likewise, if $T$ is a torus and $M$ is a Hamiltonian $T$-space,
the symbol $M\quott_\lambda T$ denotes the symplectic quotient
of $M$ at $\lambda$.

\end{prop}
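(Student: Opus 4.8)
The plan is to build $M_{\rm impl}$ stratum by stratum from the wall structure of the chamber and then to verify the Sjamaar--Lerman axioms locally. First I would partition $\ft^*_+$ into its open faces $\sigma$, noting that the centralizer $K_\lambda$ is constant (call it $K_\sigma$) as $\lambda$ ranges over the relative interior $\sigma^\circ$, and that $[K_\sigma,K_\sigma]$ is semisimple with $K_\sigma=T\cdot[K_\sigma,K_\sigma]$. The candidate strata are the sets $\Phi^{-1}(\sigma^\circ)/[K_\sigma,K_\sigma]$. The open top face gives the principal stratum, where $K_\sigma=T$, $[K_\sigma,K_\sigma]$ is trivial, and the Guillemin--Sternberg cross-section theorem already makes $\Phi^{-1}((\ft^*_+)^\circ)$ a symplectic manifold with Hamiltonian $T$-action. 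For a lower face I would pass to the symplectic cross-section $Y_\sigma\subset M$ transverse to $\sigma$, on which $K_\sigma$ acts in Hamiltonian fashion, and realise the stratum as the symplectic reduction of $Y_\sigma$ by the semisimple group $[K_\sigma,K_\sigma]$; this produces a symplectic manifold carrying the residual Hamiltonian $T$-action.

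The heart of the argument, and the step I expect to be the \emph{main obstacle}, is to show that these strata assemble into a genuine stratified symplectic space in the sense of Sjamaar--Lerman, with the frontier condition and a local cone structure. For this I would invoke the Marle--Guillemin--Sternberg local normal form at a point $m$ with $\Phi(m)=\lambda\in\sigma^\circ$, which presents a $K$-invariant neighbourhood as an associated bundle $K\times_{K_\lambda}(\ldots)$ built from a linear symplectic slice. The key point is that imploding commutes with this local model, so the analysis reduces to a universal linear model, essentially an iterated implosion of $\CC^n$ with a fixed torus action. I would check on this model that the strata glue with the required local triviality, that the smooth structure on $M_{\rm impl}$ given by functions pulling back to smooth invariant functions on $\Phi^{-1}(\ft^*_+)$ carries a Poisson bracket whose symplectic leaves are exactly the strata, and that the frontier condition holds face by face. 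The combinatorics of how the faces of $\ft^*_+$ refine upon restriction to $\sigma$ is what makes this bookkeeping delicate.

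The moment map descends immediately: since $\Phi(km)=\mathrm{Ad}^*_k\lambda=\lambda$ for $k\in K_\lambda$ and $m\in\Phi^{-1}(\lambda)$, the map $\Phi$ is constant on equivalence classes and induces $\Phi_{\rm impl}$, which restricts on each stratum to the moment map for the Hamiltonian $T$-action constructed above. Finally, for the comparison of quotients I would use the shifting/cross-section identification $M\quott_{\mathcal{O}_\lambda}K=\Phi^{-1}(\mathcal{O}_\lambda)/K\cong\Phi^{-1}(\lambda)/K_\lambda$, together with the fact that the imploded fibre is $\Phi_{\rm impl}^{-1}(\lambda)=\Phi^{-1}(\lambda)/[K_\lambda,K_\lambda]$. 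The identity $K_\lambda=T\cdot[K_\lambda,K_\lambda]$ then gives
$$M_{\rm impl}\quott_\lambda T=\bigl(\Phi^{-1}(\lambda)/[K_\lambda,K_\lambda]\bigr)/T=\Phi^{-1}(\lambda)/K_\lambda\cong M\quott_{\mathcal{O}_\lambda}K,$$
and associativity of symplectic reduction shows the symplectic structures agree. This last part is essentially formal once the stratified structure is in place.
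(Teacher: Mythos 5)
The first thing to say is that the paper contains no proof of this proposition: it is quoted as background from \cite{GJS} (Theorem 3.4, together with Theorem 2.10 there), so the only meaningful comparison is with the argument in that reference. Your sketch does follow the same route as \cite{GJS}: decompose $\ft^*_+$ into open faces $\sigma$ with constant centralizer $K_\sigma$, realise the piece of $M_{\rm impl}$ lying over $\sigma$ as the reduction at level $0$ of the Guillemin--Sternberg symplectic cross-section $Y_\sigma$ by the commutator subgroup $[K_\sigma,K_\sigma]$, use the local normal form to control the local structure and the frontier condition, and deduce the identification of quotients from $K_\lambda=T\cdot[K_\lambda,K_\lambda]$ exactly as in your last display. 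That final step is indeed the formal part, though you should record that it uses the connectedness of $K_\lambda$ (a centralizer of a torus in a compact connected group), without which $K_\lambda=T\cdot[K_\lambda,K_\lambda]$ can fail.

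The one genuine imprecision is your claim that reducing $Y_\sigma$ by the semisimple group $[K_\sigma,K_\sigma]$ ``produces a symplectic manifold.'' In general $0$ is not a regular value of the $[K_\sigma,K_\sigma]$-moment map and the action on the zero level set is not free, so this reduction is itself a singular Sjamaar--Lerman quotient. Consequently the pieces $\Phi^{-1}(\sigma)/[K_\sigma,K_\sigma]$ are stratified symplectic spaces rather than strata, and the true strata of $M_{\rm impl}$ are indexed by a face $\sigma$ together with an orbit type for the $[K_\sigma,K_\sigma]$-action; this is precisely how \cite{GJS} argue, by invoking the Sjamaar--Lerman theorem on each piece and then verifying that the resulting two-level decomposition of the whole space satisfies the frontier condition. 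Your appeal to the Marle--Guillemin--Sternberg normal form is the right tool for that verification, so this is a repairable gap in bookkeeping rather than a wrong approach, but as written the stratification you propose is too coarse to be ``stratified in the sense of Sjamaar--Lerman.''
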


This proposition introduced a way to abelianize Hamiltonian $K$-space for any compact connected Lie group $K$.
The following proposition is one of the reasons why the study of the universal imploded space $(T^*K)_{{\rm impl}}$ is important, and why \cite{GJS} called $(T^*K)_{{\rm impl}}$ the \emph{universal} imploded space.

\begin{prop}\cite[Theorem 4.9]{GJS}
Consider the imploded cross-section $(T^*K)_{{\rm impl}}$ of the cotangent bundle $T^*K$. Then there is an isomorphism between Hamiltonian $T$-spaces $$M_{\rm impl}\cong (M\times (T^*K)_{{\rm impl}})\quott_{0} K$$ where the quotient is taken with respect to the diagonal $K$-action.
\end{prop}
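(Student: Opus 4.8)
The plan is to present both the imploded cross-section and symplectic reduction as quotient constructions carried out along two \emph{commuting} $K$-actions on $T^*K$, and then to observe that constructions performed along commuting actions may be interchanged. This reduces the proposition to the classical cotangent-bundle identity together with a bookkeeping of residual actions and moment maps.

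First I would fix the structure of the universal object. In the left trivialization $T^*K\cong K\times\fk^*$, the cotangent bundle is a Hamiltonian $(K\times K)$-space: the left factor acts by $a\cdot(k,\xi)=(ak,\xi)$ with moment map $(k,\xi)\mapsto\mathrm{Ad}^*_k\xi$, and the right factor acts by $(k,\xi)\cdot b=(kb,\mathrm{Ad}^*_{b^{-1}}\xi)$ with moment map $(k,\xi)\mapsto-\xi$. The universal imploded cross-section is the imploded cross-section of the \emph{right} action, so it retains a residual Hamiltonian left $K$-action and a residual Hamiltonian $T$-action coming from the maximal torus of the right factor; explicitly $(T^*K)_{\rm impl}=(K\times\ft^*_+)/\!\!\sim$ with $(k,\lambda)\sim(kh,\lambda)$ for $h\in[K_\lambda,K_\lambda]$, the residual $K$-moment map being $[k,\lambda]\mapsto\mathrm{Ad}^*_k\lambda$ and the residual $T$-moment map being $[k,\lambda]\mapsto\lambda\in\ft^*_+$.

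Next I would regard $M\times T^*K$ as a Hamiltonian $(K\times K)$-space, with the first copy of $K$ acting diagonally on $M$ and on the left factor of $T^*K$, and the second copy acting through the right factor. Because the two actions commute, the moment map of each is invariant under the other; consequently the zero level set of the first (diagonal) moment map is stable under the second $K$, the locus where the second moment map lands in $\ft^*_+$ is stable under the first $K$, and the imploding equivalence relation for the second action is equivariant for the first. This is exactly what allows the two constructions to be interchanged. Carrying them out in one order---first imploding the second $K$, then reducing the first $K$ at $0$---produces $(M\times(T^*K)_{\rm impl})\quott_0 K$, the right-hand side of the statement. Carrying them out in the other order---first reducing the first $K$ at $0$, then imploding the residual second $K$---produces, by the classical identity $(M\times T^*K)\quott_0 K\cong M$ (under which the residual right $K$-action is identified with the original $K$-action on $M$), precisely $M_{\rm impl}$. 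I would then make the resulting bijection explicit by sending the class of a pair $(m,[k,\lambda])$ in the zero level set, where the moment condition reads $\Phi(m)=\mathrm{Ad}^*_k\lambda$, to the class of $k^{-1}m$; since $\Phi(k^{-1}m)=\lambda\in\ft^*_+$, this lands in $\Phi^{-1}(\ft^*_+)$. Checking independence of the choice of $K$-representative and of the imploding representative $h\in[K_\lambda,K_\lambda]$, and constructing the inverse $[m']\mapsto[m',[e,\Phi(m')]]$, is routine, and simultaneously verifies that the bijection intertwines the residual $T$-moment map with $\Phi_{\rm impl}$.

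The main obstacle I expect is not the set-theoretic bijection but the compatibility of the \emph{stratified symplectic} structures in the sense of Sjamaar--Lerman. Both sides are singular: implosion introduces strata indexed by the faces of $\ft^*_+$ (equivalently by conjugacy classes of stabilizers), and reduction at $0$ is likewise singular. One must check that the two orders of operations yield the same stratification and the same reduced two-form on each stratum, so that the homeomorphism above is genuinely an isomorphism of stratified Hamiltonian $T$-spaces rather than merely a $T$-equivariant homeomorphism. I would handle this stratum by stratum, using the local normal form for the moment map together with reduction in stages: on the open dense stratum the claim is ordinary reduction of a product, while on the lower strata one compares the two singular reductions through the commuting-quotient description and confirms that the residual $T$-forms and moment maps agree with those produced by implosion. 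Controlling these singular reductions simultaneously across all strata, rather than just identifying the underlying point sets, is the delicate part of the argument.
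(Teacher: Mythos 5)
The paper does not actually prove this proposition: it is quoted verbatim from \cite[Theorem 4.9]{GJS}, so there is no in-paper argument to compare against. Your proposal is, in essence, the argument of the cited reference --- exploiting the commuting $K\times K$-action on $M\times T^*K$, performing implosion along one factor and reduction at $0$ along the other in either order, and realizing the isomorphism explicitly by $(m,[k,\lambda])\mapsto [k^{-1}m]$ with inverse $[m']\mapsto[m',[e,\Phi(m')]]$ --- and it is correct up to sign conventions in the left/right moment maps (with your stated moment map $[k,\lambda]\mapsto\mathrm{Ad}^*_k\lambda$ the zero-level condition would read $\Phi(m)=-\mathrm{Ad}^*_k\lambda$, so one sign must be flipped somewhere); you also correctly identify that the real content is the compatibility of the Sjamaar--Lerman stratified symplectic structures, which is where the cited proof does its work.
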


The imploded cross-section of the universal example for $SU(3)$
(\cite[example 6.16]{GJS})
 is a complex hypersurface determined
  by the quadratic equation $\sum_{k=1}^3 w_k z_k = 0$
 where $w_k, z_k$ are complex variables ($k = 1,2, 3$).
This space has an isolated singularity at $0$, because
it can be constructed from the preimage of $0$ under
the holomorphic map $f: (z, w) \mapsto \sum_k w_k z_k$ and
the derivatives are $$df(\partial/\partial z_k) = w_k,
\quad \mbox{and}\quad df(\partial/\partial w_k ) = z_k $$
so  $$df = (z_1,z_2, z_3, w_1, w_2, w_3),$$
 which
equals $0$ only if $z_j = w_j = 0 $ for all $j$, so  $(0, \cdots, 0)$ is the
only singular point.

\begin{remit}
For moduli spaces of flat connections on 2-manifolds the imploded
cross-sections were defined in \cite{HJS}. These are
imploded cross-sections for quasi-Hamiltonian spaces.
For a compact Lie group $K$,
the quasi-Hamiltonian analogue of the universal imploded space is the
imploded cross-section of the double $DK = K \times K$.
When $K = SU(n)$, the imploded cross-section of the
 double $DK$ has a stratum whose
closure is a smooth even-dimensional sphere.
(See \cite{HJS}.)
\end{remit}

\section{Intersection cohomology}

Our main reference for this section is book \cite{KW}. Intersection homology and cohomology are designed to understand the
topology of singular spaces. Since the imploded cross-section is almost always singular (often worse than orbifold singularities, see \cite[Section 6]{GJS}), it is natural to consider the intersection cohomology of the space. The definition for intersection cohomology is somewhat lengthy so we do not state it here. Instead, we list some of its important properties.

\begin{prop} (cf: \cite{KW})
\begin{enumerate}
\item If the space is smooth, then its intersection homology (resp. cohomology) is the same as its ordinary homology (resp. cohomology).
\item Intersection homology (resp. cohomology) is a homeomorphism invariant but not a homotopy invariant.
\item Poincar\'{e} duality holds (cf: \cite[Theorem 5.1.1]{KW}).
\item Excision and the Mayer-Vietoris sequence are valid
 (cf: \cite[Section 4.6]{KW}).
\end{enumerate}
\end{prop}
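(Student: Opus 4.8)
The plan is to read this proposition as a recollection of the foundational theorems of Goresky--MacPherson and to indicate, property by property, which model of intersection (co)homology makes the proof most transparent. For (1) and (4) I would use the original singular intersection-chain complex, in which $IC_*$ sits inside the ordinary singular chain complex $C_*$ as the subcomplex of allowable chains; for (2) and (3) I would instead pass to the sheaf-theoretic Deligne construction, in which intersection cohomology is the hypercohomology of a complex of sheaves $\mathbf{IC}$ on the underlying space.

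Property (1) is essentially definitional. On a manifold the only stratum is the top (open, dense) stratum, so there are no singular strata against which to test a chain; the perversity allowability conditions are therefore vacuous and every singular chain is automatically allowable. Thus $IC_* = C_*$ and the two theories agree. Property (4) follows exactly as for ordinary singular homology: the barycentric-subdivision argument proving the small-simplices theorem respects allowability (subdividing an allowable chain yields an allowable chain), so excision holds on the intersection-chain level, and Mayer--Vietoris is then its formal algebraic consequence; alternatively, excision and Mayer--Vietoris are automatic from the sheaf description, being general properties of hypercohomology. I would cite \cite[Section 4.6]{KW} for the clean statement.

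Properties (2) and (3) are where the sheaf formalism earns its keep. For (2), the key point is \emph{topological invariance}: the Deligne complex $\mathbf{IC}$ is characterized, up to quasi-isomorphism in the derived category, by a short list of support and cosupport (normalization) axioms that refer to the stratification only through the codimensions of strata. The Goresky--MacPherson theorem that this object depends only on the underlying space, and not on the chosen stratification, immediately upgrades to homeomorphism invariance, since a homeomorphism carries one Deligne complex to the other. For the negative half of (2) I would simply produce a contractible space with nontrivial intersection homology: the open cone $cL$ on a compact pseudomanifold $L$ is contractible, yet its intersection homology is a \emph{truncation} of that of $L$ rather than that of a point. This is precisely the phenomenon the paper will exploit, since the imploded cross-section is contractible. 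For (3), I would invoke Verdier duality: on an oriented $n$-dimensional pseudomanifold the Verdier dual of the middle-perversity complex is, up to shift, again the middle-perversity complex (the middle perversity being self-complementary when all strata have even codimension), and taking hypercohomology turns this self-duality into the nondegenerate intersection pairing of \cite[Theorem 5.1.1]{KW}.

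The hard part is (2). Topological invariance is the single nontrivial theorem here, and it is exactly what cannot be seen from the naive allowable-chain definition: from that viewpoint $IC_*$ appears to depend on the stratification, and only the passage to the axiomatically characterized Deligne sheaf reveals that it does not. By contrast (1) is formal, (4) is the standard subdivision argument, and (3) is a routine consequence once the sheaf-theoretic duality package is in place.
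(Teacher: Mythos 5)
The paper offers no proof of this proposition: it is stated as a list of standard properties with pointers to \cite{KW}, and that is all the authors intend. Your outline is a correct and essentially standard account of how these properties are established, so there is no gap; the only difference is that you supply arguments where the paper supplies citations. Two small precisions are worth recording. First, Poincar\'e duality for intersection homology in general pairs a perversity $\bar{p}$ with its complement $\bar{q}=\bar{t}-\bar{p}$; the middle-perversity self-duality you invoke is available in this paper because the relevant space (a complex hypersurface with an isolated singularity) has only even-codimension strata, exactly as you note parenthetically. Second, topological invariance (hence homeomorphism invariance) holds for perversities satisfying the Goresky--MacPherson growth conditions on pseudomanifolds without codimension-one strata, which is the setting of \cite{KW} and of this paper, so your appeal to the axiomatic characterization of the Deligne complex is appropriate (one can alternatively cite King's direct proof for singular intersection chains). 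Your observation that the open cone on a compact pseudomanifold is contractible yet has the truncated intersection cohomology of its link is precisely the mechanism the paper exploits in its main theorem.
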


For our purposes, we will also state the result for computing the intersection cohomology of spaces with isolated singularities as follows.

Let $U$ be an open neighbourhood of real
dimension $2n$
in a space with an isolated singularity
at $x$. Then the intersection homology of $U$ is, (see \cite[p. 94, Chap. 6.3]{KW})

$$ IH_i (U) \cong \begin{cases} H_i(U \setminus \{x\})  &  i \le n -1 \cr
 \mbox{Im} \left(H_i(U \setminus \{x\})  \to H_i(U)\right) &  i = n  \cr
 H_i(U) &  i \ge n+1  \cr
\end{cases}
$$

In fact
(\cite{KW} p. 95, taking duals of equation (6.13))
 we have the following characterization of the intersection cohomology of $U$:
$$IH^i(U) \cong \begin{cases} H^i(Y) & i \le n-1 \cr
0 & i \ge n \cr
\end{cases}
$$
when
$U $ is homeomorphic to the cone $C(Y)$ for a compact Riemannian manifold
$Y$ of real dimension $2n-1$.

\section{Intersection cohomology of $SU(3)$ universal example}

The universal imploded cross-section that we want to consider is
 $$(T^*SU(3))_{{\rm impl}}= \{(z,w) \in \CC^3 \times \CC^3: z \cdot w =0\}.$$
As explained in Section 2, this space has an isolated singularity at $(0,0)$.
Let $$U= \{(z,w) \in \CC^3 \times \CC^3: z \cdot w =0,
      |z|^2 + |w|^2 <  1 + \epsilon \}$$ be an open neighborhood of the singularity, then $IH^j((T^*SU(3))_{{\rm impl}})=IH^j(U)$, since $(T^*SU(3))_{{\rm impl}}$ is homeomorphic to $U$.

Consider a compact Riemannian manifold $Y$ defined by
      \begin{equation*}
Y = \{(z,w) \in \CC^3 \times \CC^3: z \cdot w =0,
      |z|^2 + |w|^2 =  1  \}.
\end{equation*}
Then the space $U$ is homeomorphic to the cone $C(Y)$, and $dim_\RR Y = 9$. According to Section 3, this means $n=5$ and $IH^j(U)$ is the same as $H^j(Y)$ for $0 \le j \le 4$, and
is $0$ otherwise.

Consider the subspace of real dimension $9$
$$ X = \{(z,w) \in (\CC^3 \setminus \{0\} )\times (\CC^3 \setminus \{0\}): z \cdot w = 0, |z|^2 + |w|^2  = 1   \}\subset Y.$$

The following Lemma gives us a deformation retraction from $X$ to $SU(3)$.
\begin{lemma} There is a surjective map
$$\{ (z, w) \in (\CC^3 \setminus \{0\}) \times (\CC^3 \setminus \{0\}):
z\cdot w = 0 \} \to SU(3)$$ defined as follows:

$$ (z,w) \mapsto M=\begin{pmatrix} z_1/|z| & \bar{w}_1/|w| & u_1/|u| \cr
z_2/|z| & \bar{w_2}/|w| & u_2/|u| \cr
z_3/|z| & \bar{w}_3/|w| & u_3/|u|\cr
\end{pmatrix} $$
where $u:= (u_1, u_2,u_3) \in \CC^3 \setminus \{0\}$
satisfy $\bar{z} \cdot u= 0 $, $w \cdot u = 0 $
and the determinant of $M$ is $1$.
\end{lemma}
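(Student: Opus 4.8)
The plan is to verify three things in turn: that the map is well-defined, that its image lies in $SU(3)$, and that it is surjective. The organizing observation, which I would record first, is that all three orthogonality conditions in the statement are just the pairwise Hermitian orthogonality of the columns of $M$, written in terms of the bilinear pairing $a\cdot b=\sum_k a_kb_k$. Writing $\langle a,b\rangle=\sum_k\bar a_kb_k$ for the standard Hermitian inner product, one has $\langle z,\bar w\rangle=\overline{z\cdot w}$, so the hypothesis $z\cdot w=0$ says the first two columns $z/|z|$ and $\bar w/|w|$ are orthogonal; likewise $\bar z\cdot u=\langle z,u\rangle$ and $w\cdot u=\langle\bar w,u\rangle$, so the conditions $\bar z\cdot u=0$ and $w\cdot u=0$ say the third column $u/|u|$ is orthogonal to the first and second columns respectively.

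Next I would establish existence of $u$ and well-definedness. Since $z$ and $\bar w$ are nonzero and, by the computation above, Hermitian-orthogonal, they are linearly independent and span a two-dimensional subspace of $\CC^3$; its Hermitian orthogonal complement is one-dimensional, which produces a nonzero $u$ satisfying $\bar z\cdot u=0$ and $w\cdot u=0$, unique up to a complex scalar. After normalizing, $u/|u|$ is determined only up to a phase $e^{i\theta}$, and replacing the third column by $e^{i\theta}u/|u|$ multiplies $\det M$ by $e^{i\theta}$. Because the first two columns are already orthonormal, $M$ is unitary, so $|\det M|=1$, and there is exactly one phase making $\det M=1$; this pins down $u/|u|$ and shows $(z,w)\mapsto M$ is a genuine function. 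Membership in $SU(3)$ is then immediate: each column is a unit vector by construction, the three orthogonality relations make them orthonormal so $M$ is unitary, and the determinant normalization places $M$ in $SU(3)$.

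For surjectivity I would take an arbitrary $M\in SU(3)$ with orthonormal columns $c_1,c_2,c_3$ and set $z=c_1$ and $w=\bar c_2$, both nonzero. Then $z\cdot w=\sum_k(c_1)_k\overline{(c_2)_k}=\overline{\langle c_1,c_2\rangle}=0$, so $(z,w)$ lies in the domain. Running the construction gives first column $c_1$ and second column $c_2$; the third column is forced to lie in the orthogonal complement of $\mathrm{span}\{c_1,c_2\}$, which is spanned by $c_3$, so it equals $e^{i\phi}c_3$ for some phase, and the determinant normalization forces $e^{i\phi}\det(c_1,c_2,c_3)=e^{i\phi}=1$. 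Hence the construction returns exactly $M$.

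The only genuinely subtle point is the well-definedness established in the second step: one must confirm that the geometric datum ``Hermitian orthogonal complement of $\mathrm{span}\{z,\bar w\}$,'' together with the determinant normalization, singles out a unique vector rather than leaving a circle of choices. Everything else is a direct dictionary between the bilinear pairing $z\cdot w$ and the Hermitian inner product.
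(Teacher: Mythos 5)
Your proof is correct, and it takes a cleaner route than the paper's. The paper's proof is a bare-hands elimination: it writes out the two linear equations $\bar z\cdot u=0$ and $w\cdot u=0$, solves for $u_1$ and $u_2$ in terms of $u_3$ via explicit formulas with denominators such as $\bar z_1w_2-\bar z_2w_1$, and then fixes the scale of $u_3$ by the unit-norm and determinant conditions. That computation implicitly assumes the denominators are nonzero (a case analysis the paper omits), and the paper never explicitly verifies that $M$ is unitary or that the map is surjective --- it only establishes that $u$ is determined up to the phase killed by the determinant condition. Your argument replaces the elimination by the observation that the three bilinear conditions are exactly pairwise Hermitian orthogonality of the columns, so $u$ spans the one-dimensional orthogonal complement of $\mathrm{span}\{z,\bar w\}$; this is uniform in $(z,w)$, makes unitarity of $M$ immediate, and your final step (taking $z=c_1$, $w=\bar c_2$ for an arbitrary $M\in SU(3)$) supplies the surjectivity check that the statement actually asserts and that the paper leaves implicit. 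In short: the paper buys explicit formulas for $u$ (which are later of no particular use), while your version buys a complete and coordinate-free verification of every clause of the lemma.
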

\begin{proof}
We solve for $(u_1,u_2, u_3)$ as follows.
Let
\beq \label{eq1}
u_1 \bar{z}_1 + u_2 \bar{z}_2+u_3 \bar{z}_3 = 0 \eeq
\beq \label{eq2}
u_1 w_1 + u_2 w_2+u_3 w_3 = 0 \eeq
Multiply (\ref{eq1}) by $w_1$ and
multiply (\ref{eq2}) by $\bar{z}_1$, then subtract to eliminate $u_1$:
\beq \label{eq3}
u_2(\bar{z}_2 w_1 - \bar{z}_1 w_2) + u_3 (\bar{z}_3 w_1 - \bar{z}_1 w_3) = 0
\eeq

Likewise, multiply (\ref{eq1}) by $w_2$ and multiply (\ref{eq2}) by
$\bar{z}_2$ to eliminate
$u_2$:

\beq \label{eq4}
u_1 (\bar{z}_1 w_2 - \bar{z}_2 w_1) + u_3 (\bar{z}_3 w_2 - \bar{z}_2 w_3) = 0
\eeq

This determines $(u_1,u_2, u_3)$ up to multiplication by a complex constant.
We find
\beq \label{eq5}
u_1 = -\frac{\bar{z}_3 w_2 - \bar{z}_2 w_3 }{\bar{z}_1 w_2 - \bar{z}_2 w_1} u_3\eeq
\beq
u_2 = -\frac{ \bar{z}_3 w_1 - \bar{z}_1 w_3      }{\bar{z}_2 w_1 - \bar{z}_1 w_2    } u_3
\eeq

The value of $u_3$ is determined (up to multiplication by a
complex constant  of unit norm $e^{i \theta} $) by the condition
\beq \label{norm}
|u_1|^2 + |u_2|^2 + |u_3|^2 = 1
\eeq
In fact it is uniquely determined by the additional
condition that
the determinant of the matrix $M$ with columns $z,w,u$ is $1$
(since we want this matrix to be in $SU(3)$).
\end{proof}

We now state our result:
\begin{theorem}
The intersection cohomology of the universal imploded cross-section of $SU(3)$ is
\begin{eqnarray*}
IH^j( (T^*SU(3))_{{\rm impl}} )&\cong&\left\{\begin{array}{ll} \mathbb{R} & j=0,4 \\ 0 & j=1,~2,~3,\mbox{and}~j\geq 5\end{array}\right.
\end{eqnarray*}
\end{theorem}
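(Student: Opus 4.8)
The plan is to lean on the cone formula from Section~3, which has already reduced the problem to computing ordinary cohomology: $IH^j(U)\cong H^j(Y)$ for $0\le j\le 4$ and $IH^j(U)=0$ for $j\ge 5=n$. Thus the entire content of the theorem is the computation of $H^*(Y)$ through degree $4$, where $Y=\{(z,w):z\cdot w=0,\ |z|^2+|w|^2=1\}$. Everything beyond this follows from the structure already set up.

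First I would stratify $Y$ according to whether $z$ or $w$ vanishes. Since $|z|^2+|w|^2=1$ rules out $z=w=0$, the locus $\{z=0\}$ is the sphere $\{(0,w):|w|=1\}\cong S^5$, the locus $\{w=0\}$ is a second copy of $S^5$, and the complement of these two spheres is exactly the open subset $X$. This dictates the open cover $A=\{(z,w)\in Y: w\neq 0\}$ and $B=\{(z,w)\in Y: z\neq 0\}$, for which $A\cup B=Y$ and $A\cap B=X$. Scaling $z$ to $0$ while renormalizing, $(z,w)\mapsto \bigl((1-t)z,\,w\bigr)/\bigl|((1-t)z,w)\bigr|$, preserves both the equation $z\cdot w=0$ and the condition $w\neq 0$, giving a deformation retraction of $A$ onto $\{z=0\}\cong S^5$; symmetrically $B$ retracts onto $\{w=0\}\cong S^5$. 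The Lemma already supplies a deformation retraction of $A\cap B=X$ onto $SU(3)$. Hence in the Mayer--Vietoris sequence I may substitute $H^*(A)\cong H^*(B)\cong H^*(S^5)$ (so $\mathbb{R}$ in degrees $0,5$) and $H^*(X)\cong H^*(SU(3))$ (so $\mathbb{R}$ in degrees $0,3,5,8$, with Poincar\'e polynomial $(1+t^3)(1+t^5)$).

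Finally I would read off $H^*(Y)$ from Mayer--Vietoris. In degrees $1,2,3$ every term $H^j(A)\oplus H^j(B)$ and $H^j(X)$ vanishes (note in particular $H^3(S^5)=0$), which forces $H^1(Y)=H^2(Y)=H^3(Y)=0$, while the degree-$0$ portion gives $H^0(Y)=\mathbb{R}$. The only nontrivial contribution is the segment
\[
0 = H^3(A)\oplus H^3(B)\longrightarrow H^3(X)\xrightarrow{\ \delta\ }H^4(Y)\longrightarrow H^4(A)\oplus H^4(B)=0,
\]
which shows the connecting map $\delta$ is an isomorphism, whence $H^4(Y)\cong H^3(SU(3))\cong\mathbb{R}$. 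Combined with the cone formula this yields $IH^j=\mathbb{R}$ for $j=0,4$ and $0$ otherwise, as claimed.

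The main conceptual point --- and the only step that is not bookkeeping --- is this last isomorphism: the degree-$3$ generator of $H^*(SU(3))$ does not survive as a class on $Y$ but is instead carried by the connecting homomorphism into $H^4(Y)$. This degree shift is precisely what makes $IH^*$ of the imploded cross-section differ from that of a point. Accordingly, the crux to verify is that the two glued-in $5$-spheres contribute nothing in degrees $\le 4$ and that the restriction $H^3(A)\oplus H^3(B)\to H^3(X)$ is forced to vanish for dimensional reasons, leaving $\delta$ both injective and surjective.
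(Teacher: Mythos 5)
Your argument is correct, and it reaches the same computation by a slightly different decomposition of $Y$. The paper covers $Y$ by $X$ together with a neighbourhood $W$ of the two singular-locus spheres (defined by $|z|^2<\epsilon$ or $|w|^2<\epsilon$), so that $W$ retracts to $S^5\sqcup S^5$ and $W\cap X$ retracts to \emph{two} copies of $SU(3)$; the key step there is identifying the restriction $H^3(W)\oplus H^3(X)\to H^3(W\cap X)$ with the diagonal $H^3(SU(3))\to H^3(SU(3))\oplus H^3(SU(3))$ and using its injectivity to get $H^3(Y)=0$ and $H^4(Y)\cong\mathbb{R}$, finishing with Poincar\'e duality. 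Your cover by $A=\{w\neq 0\}$ and $B=\{z\neq 0\}$, with $A\cap B=X$ retracting to a single $SU(3)$, is equally valid (your explicit retractions of $A$ and $B$ onto the two $5$-spheres are fine), and it buys a small simplification: the decisive segment becomes $0\to H^3(X)\to H^4(Y)\to 0$, so the connecting map is an isomorphism purely because $H^3(S^5)=H^4(S^5)=0$, with no need to identify any restriction map as a diagonal. You also correctly observe that only degrees $\le 4$ of $H^*(Y)$ are needed, so Poincar\'e duality can be skipped. One small point of care: for $j=1$ the term $H^{j-1}(A\cap B)=H^0(X)\cong\mathbb{R}$ does \emph{not} vanish, so $H^1(Y)=0$ requires noting that the difference map $H^0(A)\oplus H^0(B)\to H^0(X)$ is surjective (hence the connecting map into $H^1(Y)$ is zero) before invoking $H^1(A)\oplus H^1(B)=0$; this is routine but should be said.
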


\begin{proof}
As explained earlier, we only need to compute the cohomology of $Y$.
Define $$ W = \{ (z,w) \in \CC^3 \times \CC^3: z \cdot w =0,
      |z|^2 + |w|^2 =  1, |z|^2 < \epsilon   ~\mbox{or}~ |w|^2 < \epsilon \} \subset Y.$$
So $W$ retracts to the disjoint union of two copies of $S^5$.


Then $Y = W \cup X$, and we compute the cohomology of $Y$ using the Mayer-Vietoris sequence for $W$ and $X$ as follows.

The intersection $ W \cap X$ retracts to the disjoint union of
$$ \{(z,w) \in \CC^3 \times \CC^3:  z \cdot w =0,~|z|^2 = \epsilon, |w|^2 =  1- \epsilon\} $$
and
$$ \{(z,w) \in \CC^3 \times \CC^3:  z \cdot w =0,~|z|^2 =1- \epsilon, |w|^2 =  \epsilon\}. $$
Each of these spaces is another copy of $SU(3) $.

So our Mayer-Vietoris sequence involves

$$ \cdots\to H^i(W \cup X) \to H^i(W) \oplus H^i(X) \to H^i (W \cap X) \to \cdots,$$
in other words
$$ \to H^i (Y) \to H^i (S^5) \oplus H^i (S^5) \oplus H^i (SU(3)) \to H^i (SU(3))\oplus H^i (SU(3)) \to .$$

The cohomology ring of  $SU(3)$ is the exterior algebra generated by
one generator $u$
of degree $3$ and another generator $v$ of degree $5$
$$ < u,v: v^2 = u^2 =0, uv= -vu>, $$
so we get $$H^0(SU(3)) \cong H^3 (SU(3)) \cong H^5(SU(3))  \cong H^8(SU(3)) \cong \RR$$
and all the others are $0$.

The Mayer-Vietoris sequence shows that the value of the map
$$\iota^* : H^3(W) \oplus H^3(X) \to H^3(W \cap X) $$ determines
the value of $H^3(W \cup X)$.
By definition, the map $\iota^*$ sends $(\alpha,\beta)\in
H^3(W) \oplus H^3(X)$  to $\beta|_{W\cap X}-\alpha|_{W\cap X}
\in H^3(W \cap X).$  Since $H^3(W)\cong 0$,  $\alpha$ is zero and
so $\iota^* : H^3(X) \to H^3(W \cap X)$  is just the
diagonal map
$x \mapsto (x,x)$ from $H^3(SU(3))$ to  $H^3(SU(3)) \oplus H^3(   SU(3)) $, which
is injective. From exactness of the sequence, it implies that
$H^3(W \cup X) = 0 $ and
$H^4(W \cup X) \cong \RR$.

As a result
$$ H^0(Y) \cong \RR \cong H^9(Y)$$
$$ H^1(Y) \cong 0 \cong H^8(Y)$$
$$ H^2(Y) \cong 0 \cong H^7(Y)$$
$$ H^3(Y) \cong 0 \cong H^6(Y)$$ 
$$ H^4(Y) \cong \RR \cong H^5(Y)$$ 
where we have used Poincar\'{e} duality since $Y$ is a compact smooth manifold.

So we conclude that
\begin{eqnarray*}
IH^j( (T^*SU(3))_{{\rm impl}} )&\cong& IH^j(U)\\&\cong &\left\{\begin{array}{ll} H^j(Y) & j\leq 4\\ 0 & j \geq 5\end{array}\right.\\  &\cong&\left\{\begin{array}{ll} \mathbb{R} & j=0,4 \\ 0 & j=1,~2,~3,\mbox{and}~j\geq 5\end{array}\right.
\end{eqnarray*}

\end{proof}

\end{document}